\theoremstyle{plain}
\newtheorem{theorem}{Theorem}
\newtheorem*{prop}{Proposition}
\newtheorem{lemma}{Lemat}
\newtheorem*{wn}{Corollary}
\newtheorem{defin}[theorem]{Definition}
\date{\today}
\begin{document}
\title{On balayage and B-balayage operators}
\author{Maria Nowak and Pawe\l \ Sobolewski}
\subjclass [2010]{30H25, 30H35} \keywords{Carleson measure,
balayage, BMO, Bergman spaces, analytic Besov spaces} \maketitle
\begin{abstract}
Here we consider the balayage operator in the setting of $H^p$
spaces and its Bergman space version (B-balayage) introduced by H.
Wulan, J. Yang and K. Zhu \cite{WYZ}, and extend some known results
on these operators.
\end{abstract}

\section{Introduction}

Let $\mathbb D$ denote the unit disk $\{z:\mathbb C:|z|<1\}$ and
$\mathbb T$ the unit circle. For $0<p<\infty$, the Hardy space $H^p$
consists of all functions $f$ which are holomorphic on $\mathbb D$
and satisfy
$$\|f\|_{H^p}=\sup_{0<r<1}\left\{\frac{1}{2\pi}\int_0^{2\pi}|f(re^{it})|^p dt\right\}^\frac 1p<\infty.$$
 It is known that each function $f\in H^p$ has the
radial limit $f(e^{it})=\lim_{r\to 1^-}f(re^{it})$ a.e. on $\mathbb
T$  and $f(e^{it})\in L^p(\mathbb T)$.

For $\phi\in L^1(\mathbb T)$, we say that $\phi\in BMO(\mathbb T)$
if
$$\|\phi\|_*=\sup_{I\subset \mathbb T}\frac{1}{|I|}\int_I
|\phi(e^{it})-\phi_I|dt<\infty,$$ where $I$ denotes any arc of
$\mathbb T$, $|I|$ is its arc length  and
$$\phi_I=\frac{1}{|I|}\int_I\phi(e^{it})dt.$$

 In \cite{Xiao Yuan} the authors  have recently considered
Campanato spaces  $\mathcal{L}^{p,\lambda}(\mathbb T)$ defined as
follows. For $\lambda \geq 0$ and $1\leq p<\infty$, the space
$\mathcal{L}^{p,\lambda}(\mathbb T)$ consists of all functions
$\phi\in L^p(\mathbb T)$ for which
$$\sup_{I\subset \mathbb T}\frac{1}{|I|^\lambda}\int_I
|\phi(e^{it})-\phi_I|^pdt<\infty.$$ We note that $BMO(\mathbb
T)=\mathcal{L}^{p,1},\ 1 \leq p<\infty,$ (see, e.g., \cite[pp.
222-235]{Garnet}).

 For a finite positive Borel measure $\mu$ on $\mathbb D$,
the function
\begin{equation}\label{bal}
S_\mu (e^{it})=\int_\mathbb D
\frac{1-|z|^2}{|1-ze^{-it}|^2}d\mu(z),\end{equation} \emph{is called
the balayage of} $\mu$. It follows from Fubini's theorem that $S_\mu
(e^{it})\in L^1(\mathbb T)$ (see, \cite[p. 229]{Garnet}).

 If $I\subset$ is an arc of $\mathbb T$, the Carleson square $S(I)$
is defined as
$$S(I)=\{re^{it}:e^{it}\in I, 1-\frac{|I|}{2\pi}\leq r<1\}.$$

 A positive Borel measure
$\mu$ is called an $s$-Carleson measure, $0<s<\infty$, if there
exists a positive constant $C=C(\mu)$ such that
$$\mu(S(I))\leq C(\mu)|I|^s,\quad \text{for any arc } I\subset \mathbb
T.$$ A 1-Carleson measure is simply called a Carleson measure. In
\cite{Carleson} Carleson  proved that if $\mu$ is a positive Borel
measure in $\mathbb D$, then for $0<p<\infty$, $H^p\subset
L^p(d\mu)$ if and only if $\mu$ is a Carleson measure.

It has been proved in \cite[p. 229]{Garnet} that if $\mu$ is the
Carleson measure, then $S_\mu$ belongs to $BMO(\mathbb T)$. However,
the Carleson property of measure $\mu$ is not a necessary condition
for $S_\mu$  being a $BMO(\mathbb T)$ function (\cite{Pott}).

In the next section we obtain an extension of the above mentioned
result. More exactly, we prove that if $\mu$ is an s-Carleson
measure, $0\leq s<1$, then $S_\mu$ belongs to $\mathcal{L}^{1,s}$.

In \cite{WYZ} H. Wulan, J. Yang and K. Zhu  introduced the Bergman
space
 version of the balayage operator on the unit disk that was called
 B-balayage. The B-balayage of a finite complex measure $\mu$ on $\mathbb D$
 is given by $$G_\mu(z)=\int_\mathbb D \frac{(1-|w|^2)^2}{|1-\bar zw|^4} d\mu(w), \quad z\in\mathbb D.$$
 It has been proved in  \cite{WYZ} that if $\mu$ is a
2-Carleson measure, then there exists a constant $C>0$ such that
$$|G_{\mu}(z)-G_{\mu}(w)|\leq C \beta(z,w),\quad z,w\in\mathbb D,$$
where $\beta$ is the hyperbolic metric on $\mathbb D$. Here,
applying a similar idea to that used in the proof of this result, we
prove
\begin{theorem}\label{b-balayage}
Assume that $1< p<\infty$ and  $\mu$ is a positive Borel measure on
$\mathbb D$. If $\mu$ is a $2p$-Carleson measure, then there exists
a positive constant $C=C(p)$ such that
$$|G_{\mu}(z)-G_{\mu}(w)|\leq C\,\left(\beta(z,w)\right)^{\frac 1p}$$
for all $z,w\in\mathbb D$.
\end{theorem}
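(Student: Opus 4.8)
The plan is to follow the strategy used in \cite{WYZ} for the $p=1$ case but to interpolate between a bounded estimate and a Lipschitz-type estimate. Fix $z,w\in\mathbb D$. Since both sides of the claimed inequality are invariant under replacing $G_\mu$ by $G_\mu\circ\varphi$ for a disk automorphism $\varphi$ (with $\mu$ replaced by the pullback measure, which remains a $2p$-Carleson measure with a comparable constant by the conformal invariance of the Carleson condition), I may reduce to the case $w=0$, so that one needs $|G_\mu(z)-G_\mu(0)|\le C(\beta(z,0))^{1/p}=C\bigl(\log\frac{1+|z|}{1-|z|}\bigr)^{1/p}$. Alternatively, and more robustly, I would estimate $|G_\mu(z)-G_\mu(w)|$ directly by writing
\begin{equation*}
|G_\mu(z)-G_\mu(w)|\le\int_{\mathbb D}(1-|v|^2)^2\left|\frac{1}{|1-\bar z v|^4}-\frac{1}{|1-\bar w v|^4}\right|d\mu(v),
\end{equation*}
and controlling the inner difference by the mean value theorem applied along the hyperbolic geodesic from $z$ to $w$, yielding a factor proportional to $\beta(z,w)$ times a sup over the geodesic of $(1-|u|^2)/|1-\bar u v|^2$ (the derivative of the reproducing-type kernel, in the invariant derivative).

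The two ingredients are: (i) a \emph{uniform bound} $|G_\mu(z)-G_\mu(w)|\le 2\sup_{u}G_\mu(u)<\infty$, which follows because a $2p$-Carleson measure with $p>1$ is in particular a $2$-Carleson measure (since $\mu(S(I))\le C|I|^{2p}\le C|I|^2$ for $|I|\le 2\pi$), and a $2$-Carleson measure makes $G_\mu$ bounded — this is essentially the $p=1$ result of \cite{WYZ} restricted to the observation that a Lipschitz function on $\mathbb D$ with respect to $\beta$ need not be bounded, so instead I would prove boundedness of $G_\mu$ directly from the $2$-Carleson condition by the standard dyadic decomposition of $\mathbb D$ into Carleson boxes; and (ii) a \emph{strong local estimate} showing that $|G_\mu(z)-G_\mu(w)|\le C\beta(z,w)^{1/p}$ when $\beta(z,w)$ is small, obtained from the integral above. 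For (ii) the key computation is to bound, for $u$ on the geodesic segment,
\begin{equation*}
\int_{\mathbb D}\frac{(1-|v|^2)^2(1-|u|^2)}{|1-\bar u v|^5}\,d\mu(v),
\end{equation*}
and the natural tool is to split $\mathbb D$ into hyperbolic annuli $A_k=\{v:2^k\le \tfrac{1}{1-|\varphi_u(v)|}<2^{k+1}\}$ centered at $u$; on $A_k$ one has $|1-\bar u v|\asymp 2^{-k}(1-|u|^2)$ and $\mu(A_k)\lesssim (2^{-k})^{2p}(1-|u|^2)^{2p}$ by the $2p$-Carleson estimate (after the automorphism $\varphi_u$, which preserves the Carleson class), while $(1-|v|^2)\asymp 2^{-k}(1-|u|^2)$ there.

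Summing the geometric-type series over $k$ is where the condition $p>1$ enters decisively: one collects a factor $\sum_k 2^{-k(2p-2)}$ or similar, which converges precisely when $p>1$, and this is why the estimate degrades from genuine Lipschitz ($p=1$) to H\"older with exponent $1/p$ — indeed, the finiteness of the integral $\int_{\mathbb D}\frac{(1-|v|^2)^2}{|1-\bar u v|^5}d\mu(v)$ is not uniform in $u$ unless one is willing to lose the factor $(1-|u|^2)^{-1+1/p}$, which upon integration along the geodesic of length $\beta(z,w)$ and a reparametrization gives exactly the exponent $1/p$. Concretely, I expect the final step to be: write $\beta(z,w)=L$, parametrize the geodesic by hyperbolic arclength $t\in[0,L]$, bound $|\frac{d}{dt}G_\mu(\gamma(t))|\lesssim (1-|\gamma(t)|^2)^{1/p-1}\cdot(\text{bounded factor})$ — no, better: bound it by a constant (from convergence of the series when $p>1$ \emph{and} using that $\mu$ is finite to handle the worst annuli), giving directly the Lipschitz bound on short geodesics and, combined with the global boundedness from (i), interpolating to $L^{1/p}$ for all $z,w$ by the elementary fact that a bounded function that is Lipschitz-on-small-scales with respect to a metric is globally $L\mapsto \min(L,L^{1/p})^{1/p}$-continuous — or more cleanly, I would just observe $|G_\mu(z)-G_\mu(w)|\le\min\{2\|G_\mu\|_\infty,\,CL\}\le C'L^{1/p}$ using $\min\{a,bL\}\le \max\{a,b\}L^{1/p}$ for $L\le 1$ and boundedness for $L\ge 1$.

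The main obstacle I anticipate is the careful bookkeeping in the annular decomposition after the M\"obius change of variables: one must verify that $d(\mu\circ\varphi_u)(S(I))\lesssim |I|^{2p}$ with a constant independent of $u$, which is the conformal invariance of the $2p$-Carleson condition (standard but needs the comparison $|1-\bar u v|\asymp|1-\overline{\varphi_u(v)}\,\zeta|$ type estimates on arcs), and then to extract from the convergent series the sharp power $(1-|u|^2)^{1/p}$ rather than something weaker. A cleaner alternative avoiding automorphisms entirely: estimate $|G_\mu(z)-G_\mu(w)|$ by splitting the integral over $\mathbb D$ according to the size of $1-|v|$ relative to $1-|z|,\,1-|w|$, and in each regime use $\big||1-\bar z v|^{-4}-|1-\bar w v|^{-4}\big|\lesssim |z-w|\,\max_j|1-\overline{u}\,v|^{-5}$ together with the Carleson bound on the relevant Carleson box; I would try this first since it keeps all constants explicit, and fall back on the invariant-geodesic argument if the book-keeping there becomes unwieldy.
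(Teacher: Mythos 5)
Your overall scheme --- establish (i) that $G_\mu$ is bounded, (ii) a Lipschitz estimate $|G_\mu(z)-G_\mu(w)|\le C\beta(z,w)$, and then interpolate via $\min\{A,BL\}\le\max\{A,B\}L^{1/p}$ --- is structurally reasonable and is genuinely different from the paper's argument (the paper raises the integrand to the $p$-th power by Jensen's inequality, replaces $d\mu$ by $dA_{2/(q-1)}$ using the Carleson embedding theorem for Bergman spaces, identifies the resulting $L^p(dA_\alpha)$-norm by duality with $\sup|P_2g(z)-P_2g(w)|$ over $g$ in the unit ball of $L^q(d\tau)$, and then invokes Zhu's Lipschitz estimate for the Besov space $B_q$, which produces the exponent $1/p$). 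However, your justification of (i) contains a genuine error: it is \emph{not} true that a $2$-Carleson measure makes $G_\mu$ bounded. Take $d\mu=dA$, which is a $2$-Carleson measure; the standard estimate for $\int_{\mathbb D}(1-|v|^2)^{s}|1-\bar z v|^{-(2+s+t)}\,dA(v)$ with $s=2$, $t=0$ gives $G_{dA}(z)\asymp\log\frac{2}{1-|z|^2}$, which is unbounded (consistently with the WYZ theorem: $\beta$-Lipschitz does not imply bounded, and $\log\frac{2}{1-|z|^2}\asymp\beta(z,0)$ near the boundary). If you run your proposed dyadic decomposition using only the $2$-Carleson hypothesis, the box at scale $2^n(1-|z|)$ contributes $\asymp(2^n(1-|z|))^{2p-2}$ with $2p-2=0$, so the sum has $\sim\log\frac{1}{1-|z|}$ comparable terms and diverges as $|z|\to1$.

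The gap is repairable: step (i) must use the full strength of the $2p$-Carleson hypothesis with $2p>2$, whereupon the same dyadic sum becomes $\sum_{n}(2^n(1-|z|))^{2p-2}\lesssim 1$ and $G_\mu$ is indeed bounded; step (ii) then follows by simply quoting the WYZ theorem, since a $2p$-Carleson measure with $p>1$ is a fortiori a $2$-Carleson measure, and your elementary interpolation finishes the proof. I would drop the hyperbolic-annuli computation you sketch for (ii): on $A_k=\{v:2^k\le(1-|\varphi_u(v)|)^{-1}<2^{k+1}\}$ neither $|1-\bar uv|$ nor $1-|v|^2$ is comparable to $2^{-k}(1-|u|^2)$ (the identity $1-|\varphi_u(v)|^2=(1-|u|^2)(1-|v|^2)/|1-\bar uv|^2$ constrains only a ratio), and $\mu(A_k)$ cannot be bounded by one application of the hyperbolic-disk form of the Carleson condition because $A_k$ is a union of roughly $2^k$ such disks of bounded radius. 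Citing WYZ for the Lipschitz estimate, and reserving your own computation for the boundedness of $G_\mu$ under the $2p$-Carleson condition, yields a correct and arguably more elementary proof than the paper's duality argument, at the cost of relying on the unproved-here WYZ theorem rather than on the Besov-space machinery the paper sets up.
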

Here $C$ will denote a positive constant which can vary from line to
line.

\section{Balayage operators and Campanato spaces $\mathcal{L}^{1,s}$}

We start with the following
\begin{theorem}\label{tw 1}
If $\mu$ is an $s$-Carleson measure, $0<s\leq 1$,  $S_\mu$ is given
by (\ref{bal}) and $0\leq \gamma<1$, then there exists a positive
constant $C$ such that for any $I\subset \mathbb T$
$$
\frac{1}{|I|^{1+s-\gamma}}\int_I\int_I\frac{|S_\mu(e^{i\theta})-S_\mu(e^{i\varphi})|}{|e^{i\theta}-e^{i\varphi}|^\gamma}\,d\theta\,d\varphi\leq
C.
$$
\end{theorem}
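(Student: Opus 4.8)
The plan is to split the double integral over $I\times I$ according to the location of points in $\mathbb D$ relative to the arc $I$: near $I$ versus far from $I$. Fix an arc $I\subset\mathbb T$ and write $S_\mu(e^{i\theta})=\int_{\mathbb D}P(z,\theta)\,d\mu(z)$, where $P(z,\theta)=\dfrac{1-|z|^2}{|1-ze^{-i\theta}|^2}$ is the Poisson kernel. Decompose $\mathbb D = 2S(I) \cup \bigl(\mathbb D\setminus 2S(I)\bigr)$, where $2S(I)$ is the Carleson box over the arc $2I$ concentric with $I$ of twice the length (truncated inside $\mathbb D$), and correspondingly write $S_\mu = S_{\mu_1} + S_{\mu_2}$ with $\mu_1 = \mu|_{2S(I)}$ and $\mu_2 = \mu|_{\mathbb D\setminus 2S(I)}$. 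Then estimate the two resulting double integrals separately, using the triangle inequality $|S_\mu(e^{i\theta})-S_\mu(e^{i\varphi})| \le |S_{\mu_1}(e^{i\theta})-S_{\mu_1}(e^{i\varphi})| + |S_{\mu_2}(e^{i\theta})-S_{\mu_2}(e^{i\varphi})|$.

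For the local part $\mu_1$, I would discard the difference and just bound $|S_{\mu_1}(e^{i\theta})-S_{\mu_1}(e^{i\varphi})| \le S_{\mu_1}(e^{i\theta}) + S_{\mu_1}(e^{i\varphi})$, then integrate $S_{\mu_1}$ over $I$. By Fubini, $\int_{\mathbb T} S_{\mu_1}(e^{i\theta})\,d\theta = 2\pi\,\mu_1(\mathbb D) = 2\pi\,\mu(2S(I)) \le C|I|^s$ by the $s$-Carleson condition. The extra factor from $\int_I |e^{i\theta}-e^{i\varphi}|^{-\gamma}\,d\varphi \lesssim |I|^{1-\gamma}$ (finite since $\gamma<1$), together with the $d\theta$ integral contributing another factor of $|I|$ when the roles are symmetrized, gives a bound of order $|I|^{s}\cdot|I|^{1-\gamma} = |I|^{1+s-\gamma}$, which is exactly what is needed after dividing by $|I|^{1+s-\gamma}$.

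For the far part $\mu_2$, I would exploit smoothness of the Poisson kernel: for $z\in\mathbb D\setminus 2S(I)$ and $e^{i\theta},e^{i\varphi}\in I$, a mean value estimate gives
$$|P(z,\theta)-P(z,\varphi)| \le C\,|e^{i\theta}-e^{i\varphi}|\,\sup_{\xi\in I}\left|\tfrac{\partial}{\partial\xi}P(z,\xi)\right| \le C\,\frac{|e^{i\theta}-e^{i\varphi}|\,(1-|z|^2)}{|1-z e^{-i\theta_0}|^3},$$
where $\theta_0$ is the center of $I$, using that $|1-ze^{-i\xi}| \approx |1-ze^{-i\theta_0}| \gtrsim |I|$ uniformly for $z$ outside $2S(I)$. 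Integrating in $\mu_2$ and then estimating $\int_{\mathbb D\setminus 2S(I)} \frac{1-|z|^2}{|1-ze^{-i\theta_0}|^3}\,d\mu(z)$ by the standard dyadic-annulus decomposition of $\mathbb D\setminus 2S(I)$ into Carleson-type boxes $2^{k}I$, on each of which $|1-ze^{-i\theta_0}|\gtrsim 2^k|I|$ and $\mu(S(2^kI))\le C(2^k|I|)^s$, yields a convergent geometric series (convergence because $s<1\le \dots$; more precisely the series $\sum_k 2^{k(s-1)}$ and similar converge) bounded by $C|I|^{s-1}$. Hence $|S_{\mu_2}(e^{i\theta})-S_{\mu_2}(e^{i\varphi})| \le C\,|e^{i\theta}-e^{i\varphi}|\,|I|^{s-1}$; inserting this into the double integral cancels part of the $|e^{i\theta}-e^{i\varphi}|^{-\gamma}$ and leaves $\int_I\int_I |e^{i\theta}-e^{i\varphi}|^{1-\gamma}\,d\theta\,d\varphi \lesssim |I|^{3-\gamma}$, so the far contribution is $\lesssim |I|^{s-1}\cdot|I|^{3-\gamma} = |I|^{2+s-\gamma}$, which is even better than required after normalization.

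The main obstacle I anticipate is making the far-part kernel derivative estimate and the annular summation fully rigorous: one must carefully verify the comparability $|1-ze^{-i\xi}| \approx 1-|z| + |\arg z - \xi|$ and that this is $\gtrsim 2^k|I|$ on the $k$-th annulus for \emph{all} $\xi\in I$ (not just the center), and check that the geometric series converges for the full range $0<s\le 1$ — at the endpoint $s=1$ one should confirm the relevant exponents still give summability (here the decay comes from the extra power in $|1-ze^{-i\theta_0}|^{-3}$ versus $|1-ze^{-i\theta_0}|^{-2}$, i.e. from the differentiation, so the series is $\sum 2^{k(s-2)}$, convergent for all $s\le 1$). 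The local part is routine once Fubini and the $s$-Carleson bound are in place.
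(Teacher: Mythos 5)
Your proposal is correct and follows essentially the same route as the paper: both split $\mathbb D$ into the Carleson box over $2I$ plus dyadic annuli $S(2^{k+1}I)\setminus S(2^kI)$, handle the local part via $\int_{\mathbb T}P_z(\theta)\,d\theta=2\pi$ together with the $s$-Carleson bound on $\mu(S(2I))$, and handle the far part via smoothness of the Poisson kernel (the paper writes out the difference $P_z(\theta)-P_z(\varphi)$ explicitly in two cases, radially far and angularly far, which amounts to your derivative bound $|\partial_\xi P(z,\xi)|\lesssim (1-|z|^2)/|1-ze^{-i\theta_0}|^3$). One small arithmetic slip worth noting: the annular summation gives $\int_{\mathbb D\setminus S(2I)}(1-|z|^2)\,|1-ze^{-i\theta_0}|^{-3}\,d\mu(z)\lesssim |I|^{s-2}$, not $|I|^{s-1}$, so the far contribution is $|I|^{s-2}\cdot|I|^{3-\gamma}=|I|^{1+s-\gamma}$ --- exactly on target rather than better than required, and the proof still closes.
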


\begin{proof}
Without loss of generality we can assume that $|I|<1$.

 Let
for $z\in\mathbb D$ and $\theta\in\mathbb R$
$$P_z(\theta)=\frac{1-|z|^2}{|1-ze^{-i\theta}|^2}=\text{Re}\left(\frac{1+ze^{-i\theta}}{1-ze^{-i\theta}}\right)$$
be the Poisson kernel for  the disk $\mathbb D$. By the Fubini
theorem,
\begin{eqnarray}\label{wzor 3}
&&\int_I\int_I\frac{|S_\mu(e^{i\theta})-S_\mu(e^{i\varphi})|}{|e^{i\theta}-e^{i\varphi}|^\gamma}\,d\theta\,d\varphi\leq
\int_I\int_I\int_{\mathbb
D}\frac{|P_z(\theta)-P_z(\varphi)|}{|e^{i\theta}-e^{i\varphi}|^\gamma}d\mu(z)\,d\theta\,d\varphi\ \nonumber\\
&&=\int_{\mathbb
D}\int_I\int_I\frac{|P_z(\theta)-P_z(\varphi)|}{|e^{i\theta}-e^{i\varphi}|^\gamma}\,d\theta\,d\varphi\
d\mu(z)
\end{eqnarray}

For a subarc $I$ of $ \mathbb T$ let $2^nI,\ n\in\mathbb N$, denote
the subarc of $\mathbb T$ with the same center as $I$ and the length
$2^n|I|$.

In view of the equality
$$\int_0^{2\pi}P_z(\theta)d\theta=2\pi,$$
we have
\begin{eqnarray*}
\int_I\int_I
\frac{P_z(\theta)}{|e^{i\theta}-e^{i\varphi}|^\gamma}\,d\theta\,d\varphi&=&
\int_IP_z(\theta)\int_I
\frac{d\varphi}{|e^{i\theta}-e^{i\varphi}|^\gamma}\,d\theta \leq
C|I|^{1-\gamma}.
\end{eqnarray*}
Consequently, \begin{align}\label{wzor 4} \int_{S(2I)}\int_I\int_I
\frac{|P_z(\theta)-P_z(\varphi)|}{|e^{i\theta}-e^{i\varphi}|^\gamma}\,d\theta\,d\varphi\
d\mu(z)\leq 2C|I|^{1-\gamma}\int_{S(2I)}d\mu(z)\leq
C|I|^{1+s-\gamma}. \end{align}
 Since $P_z(\theta)\leq 4$ for
$|z|\leq\frac 12$, we get
\begin{eqnarray*}
&&\int_{|z|\leq \frac 12
}\int_I\int_I\frac{|P_z(\theta)-P_z(\varphi)|}{|e^{i\theta}-e^{i\varphi}|^\gamma}\,d\theta\,d\varphi\
d\mu(z) \leq  8\mu(\mathbb D)\int_I\int_I\frac{d\theta
d\varphi}{|e^{i\theta}-e^{i\varphi}|^\gamma} \\
&&\leq C|I|^{2-\gamma}\leq C |I|^{1+s-\gamma}.
\end{eqnarray*}

Now we assume that $\frac 12\leq |z|<1$ and $z=|z|e^{i\omega}\in
S(2^{n+1}I)\setminus S(2^nI)$. We consider two cases: $(i)$
$e^{i\omega} \in 2^nI$ and $(ii)$ $e^{i\omega} \in 2^{n+1}I\setminus
2^nI$.

 In case $(i)$ we have
$$\frac{2^n|I|}{2\pi}<1-|z|\leq \frac{2^{n+1}|I|}{2\pi}.$$
Thus
\begin{eqnarray*}
|P_z(\theta)-P_z(\varphi)|&=&\frac{(1-|z|^2)2|z||\cos(\theta-\omega)-\cos(\varphi-\omega)|}{\left((1-|z|)^2+4|z|\sin^2\frac{\theta-\omega}{2}\right)
\left((1-|z|)^2+4|z|\sin^2\frac{\varphi-\omega}{2}\right)} \\
&\leq&\frac{8|\sin\frac{(\theta-\omega)+(\varphi-\omega)}{2}||\sin\frac{(\theta-\varphi)}{2}|}{(1-|z|)^3}\\
&\leq&2\frac{\left(|\theta-\omega|+|\varphi-\omega|\right)|\theta-\varphi|}{(1-|z|)^3}.
\end{eqnarray*}
So, if  $e^{i\theta},e^{i\varphi}\in I$,  then
\begin{eqnarray}\label{wzor 1}
\frac{|P_z(\theta)-P_z(\varphi)|}{|e^{i\theta}-e^{i\varphi}|^\gamma}&\leq&C\frac{\left(|\theta-\omega|+|\varphi-\omega|\right)|\theta-\varphi|^{1-\gamma}}{(1-|z|)^3}\nonumber\\
&\leq&C\frac{2^n|I||I|^{1-\gamma}}{(2^n|I|)^3}=C\frac{|I|^{-1-\gamma}}{2^{2n}}.
\end{eqnarray}
Now we turn to case $(ii)$. Then for $e^{i\psi}\in I$,
$$2^{n-2}|I|\leq |\psi-\omega|\leq 2^n|I|.$$
Consequently, for $e^{i\theta},e^{i\varphi}\in I$, we get
\begin{eqnarray}
\label{wzor
2}\frac{|P_z(\theta)-P_z(\varphi)|}{|e^{i\theta}-e^{i\varphi}|^\gamma}&\leq&
2\frac{\left||(1-ze^{-i\theta}|^2-|1-ze^{-i\varphi}|^2\right|}{|e^{i\theta}-e^{i\varphi}|^\gamma|1-ze^{-i\theta}||1-ze^{-i\varphi}|^2}\nonumber \\
&\leq&C
\frac{\left(|\theta-\omega|+|\varphi-\omega|\right)|\theta-\varphi|^{1-\gamma}}{|\theta-\omega||\varphi-\omega|^2}\nonumber \\
&\leq& C\frac{|I|^{-1-\gamma}}{2^{2n}}.
\end{eqnarray}

Now we put $Q_n=S(2^nI),\ n=1,2,\ldots$ Then by (\ref{wzor 1}) and
(\ref{wzor 2}),

\begin{eqnarray*}
&&\int_{Q_{n+1}\setminus Q_n\atop |z|\geq \frac 12}\int _I\int_I
\frac{|P_z(\theta)-P_z(\varphi)|}{|e^{i\theta}-e^{i\varphi}|^\gamma}d\theta
d\varphi d\mu(z) \leq C
\frac{|I|^{1-\gamma}}{2^{2n}}\int_{\substack{Q_{n+1}}}d\mu(z)\leq C
\frac{|I|^{1+s-\gamma}}{2^{n(2-s)}}.
\end{eqnarray*}
The above inequality and (\ref{wzor 4}) imply \begin{eqnarray*}
&&\int_{\mathbb
D}\int_I\int_I\frac{|P_z(\theta)-P_z(\varphi)|}{|e^{i\theta}-e^{i\varphi}|^\gamma}\,d\theta\,d\varphi\
d\mu (z)\leq \int_{Q_{1}}\int_I\int_I\frac{|P_z(\theta)-P_z(\varphi)|}{|e^{i\theta}-e^{i\varphi}|^\gamma}d\theta d\varphi d\mu(z)\\
&&+ \sum_{n=1}^\infty \int_{Q_{n+1}\setminus
Q_n}\int_I\int_I\frac{|P_z(\theta)-P_z(\varphi)|}{|e^{i\theta}-e^{i\varphi}|^\gamma}d\theta d\varphi d\mu(z)\\
&&\leq C|I|^{s+1-\gamma}\sum_{n=1}^\infty \frac{1}{2^{n(2-s)}}
=C|I|^{1+s-\gamma}.
\end{eqnarray*}

\end{proof}

The next theorem shows that if $\mu$ is an s-Carleson measure,
$0<s\leq 1$, then $S_\mu$ is in the Campanato space $\mathcal
{L}^{1,s}$.
\begin{theorem}
If $\mu$ is an $s$-Carleson measure on $\mathbb D$,  $0<s\leq 1$ and
$S_\mu(t)=S_\mu(e^{it})$ is the balayage operator of $\mu$ given by
(\ref{bal}), then there exists a positive constant $C$ such that for
any $I\subset \mathbb T$
$$\frac{1}{|I|^s}\int_I|S_\mu(t)-(S_\mu)_I|dt\leq C.$$
\end{theorem}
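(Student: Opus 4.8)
The plan is to derive this as an immediate consequence of Theorem~\ref{tw 1} with the choice $\gamma=0$, which is admissible since the hypothesis there only requires $0\le\gamma<1$. With $\gamma=0$ that theorem gives a constant $C$ such that
$$\frac{1}{|I|^{1+s}}\int_I\int_I|S_\mu(e^{i\theta})-S_\mu(e^{i\varphi})|\,d\theta\,d\varphi\le C$$
for every arc $I\subset\mathbb T$ (after reducing, as there, to the case $|I|<1$; for $|I|\ge 1$ the estimate is trivial since $S_\mu\in L^1(\mathbb T)$).

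Next I would use the standard device of comparing the value against the average through a double integral. Since $(S_\mu)_I=\frac{1}{|I|}\int_I S_\mu(e^{i\varphi})\,d\varphi$, for each $e^{i\theta}\in I$ we may write $S_\mu(e^{i\theta})-(S_\mu)_I=\frac{1}{|I|}\int_I\bigl(S_\mu(e^{i\theta})-S_\mu(e^{i\varphi})\bigr)\,d\varphi$, so by the triangle inequality
$$|S_\mu(e^{i\theta})-(S_\mu)_I|\le\frac{1}{|I|}\int_I|S_\mu(e^{i\theta})-S_\mu(e^{i\varphi})|\,d\varphi.$$
Integrating this inequality in $\theta$ over $I$ and applying Fubini's theorem on the right-hand side yields
$$\int_I|S_\mu(e^{i\theta})-(S_\mu)_I|\,d\theta\le\frac{1}{|I|}\int_I\int_I|S_\mu(e^{i\theta})-S_\mu(e^{i\varphi})|\,d\theta\,d\varphi\le\frac{1}{|I|}\cdot C|I|^{1+s}=C|I|^s,$$
and dividing by $|I|^s$ gives the claim.

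There is essentially no obstacle here: the only slightly delicate point is making sure the quantitative estimate of Theorem~\ref{tw 1} is invoked with the correct value of $\gamma$ and that the borderline/large-arc cases are handled, both of which are routine. All the analytic work has already been carried out in the proof of Theorem~\ref{tw 1}.
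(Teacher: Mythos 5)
Your argument is correct and is essentially identical to the paper's proof: both reduce the oscillation integral to the double integral $\frac{1}{|I|^{s+1}}\int_I\int_I|S_\mu(t)-S_\mu(u)|\,dt\,du$ via the standard averaging/triangle-inequality device and then invoke Theorem~\ref{tw 1} with $\gamma=0$. Your version simply spells out the intermediate steps in slightly more detail.
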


\begin{proof}
It is enough to observe that
\begin{eqnarray*}
&&\frac{1}{|I|^s}\int_I|S_\mu(t)-(S_\mu)_I|dt \leq
\frac{1}{|I|^{s+1}}\int_I\int_I|S_\mu(t)-S_\mu(u)|dtdu
\end{eqnarray*}
and the inequality follows from Theorem \ref{tw 1} with $\gamma =
0$.
\end{proof}

\section{B-balayage for weighted Bergman spaces $A^p_\alpha$}

Recall that for $0<p<\infty$, $-1<\alpha<\infty$, the weighted
Bergman space $A_\alpha^p$ is the space of all holomorphic functions
in $L^p(\mathbb D,dA_\alpha)$, where
$$dA_\alpha(z)=(\alpha+1)(1-|z|^2)^\alpha dA(z)$$ and $dA$ is the
normalized Lebesgue measure on $\mathbb D$, that is $\int_\mathbb D
dA=1$. If $f$ is in $L^p(\mathbb D, dA_\alpha)$, we write
$$\|f\|_{p,\alpha}^p=\int_\mathbb D |f(z)|^p dA_\alpha(z)$$
It is well known that for $1<p<\infty$ the Bergman projection
$P_\alpha$ given by
$$P_\alpha f(z)=\int_\mathbb D \frac{f(w)}{(1-z\bar w)^{2+\alpha}}dA_\alpha (w)$$
is a bounded operator from $L^p(\mathbb D, dA_\alpha)$ onto
$A_\alpha ^p$.

 Let for $z,w\in\mathbb D$, the function
$$\varphi_z(w)=\frac{z-w}{1-\bar z w}$$ denote the
automorphism of the unit disk $\mathbb D$. The hyperbolic metric on
$\mathbb D$ is given by
$$\beta(z,w)=\frac 12
\log\frac{1+|\varphi_z(w)|}{1-|\varphi_z(w)|}.$$ For $z\in\mathbb D$
and $r>0$ the hyperbolic disk with center $z$ and radius $r$  is
$$D(z,r)=\{w\in\mathbb D:\beta(z,w)<r\}.$$
For $s>1$ the condition for an s-Carleson measure given in
Introduction is equivalent to the condition where Carleson squares
are replaced by hyperbolic disks. More exactly, we have the
following
\begin{prop}\cite{Zhu b,DS}
Let $\mu$ be a positive Borel measure on $\mathbb D$ and
$1<s<\infty$. Then the following statements are equivalent
\begin{itemize}
\item[(i)] $\mu$ is an s-Carleson measure
\item[(ii)] $\mu(D(z,r))\leq C(1-|z|^2)^s$ for some constant $C$ depending only on $r$ for all hyperbolic disk
$D(z,r)$, $z\in\mathbb D$.
\end{itemize}
\end{prop}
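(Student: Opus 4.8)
The plan is to prove the two implications separately, relying in both directions on the elementary geometric fact that, for a fixed radius $r$, the hyperbolic disk $D(z,r)$ is a Euclidean disk whose center lies on the ray through $z$ and whose Euclidean diameter is comparable to $1-|z|^2$, with comparability constants depending only on $r$. In particular, writing $\rho=\tanh r$, the identity $1-|\varphi_z(w)|^2=\frac{(1-|z|^2)(1-|w|^2)}{|1-\bar z w|^2}$ shows that $|\varphi_z(w)|<\rho$ forces $1-|w|^2$ to be comparable to $1-|z|^2$ uniformly for $w\in D(z,r)$, and that the angular spread of $D(z,r)$ is of order $1-|z|$. I would record this as a preliminary observation (it is standard, e.g.\ in Zhu's book) before starting the main argument.

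For the implication (i) $\Rightarrow$ (ii), I would fix $z\in\mathbb D$ and the radius $r$. If $|z|$ is bounded away from $1$ the estimate is trivial, since $\mu(D(z,r))\le\mu(\mathbb D)<\infty$ while $(1-|z|^2)^s$ stays bounded below; so I may assume $|z|$ is close to $1$. By the observation above, $D(z,r)$ is contained in the Carleson square $S(I)$ whose arc $I$ is centered at $z/|z|$ and has length a fixed multiple (depending on $r$) of $1-|z|^2$: the angular spread of the disk is of order $1-|z|$, and $S(I)$ reaches all the way to the boundary, so it swallows $D(z,r)$ once $|I|$ is a large enough multiple of $1-|z|$. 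Then (i) gives
$$\mu(D(z,r))\le\mu(S(I))\le C|I|^s\le C'(1-|z|^2)^s,$$
which is (ii).

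For the converse (ii) $\Rightarrow$ (i), fix an arc $I$; the case $|I|$ of order $1$ is again trivial, so assume $|I|$ small. I would split the Carleson square radially into dyadic bands
$$R_k=\Big\{w\in S(I):\ 2^{-k-1}\tfrac{|I|}{2\pi}\le 1-|w|< 2^{-k}\tfrac{|I|}{2\pi}\Big\},\qquad k=0,1,2,\dots,$$
so that $S(I)=\bigcup_k R_k$ and $1-|w|$ is comparable to $2^{-k}|I|$ on $R_k$. Each $R_k$ is a thin annular sector of angular width $|I|$ and radial thickness of order $2^{-k}|I|$; measured in the hyperbolic metric it has bounded radial extent and length of order $|I|/(2^{-k}|I|)=2^k$, so it can be covered by a number $N_k$ of hyperbolic disks of the fixed radius $r$, centered at points $w_j\in R_k$, with $N_k$ of order $2^k$. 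Applying (ii) to each of these disks and using that $1-|w_j|^2$ is comparable to $2^{-k}|I|$,
$$\mu(R_k)\le\sum_{j=1}^{N_k}\mu(D(w_j,r))\le C\,2^k\,(2^{-k}|I|)^s=C\,|I|^s\,2^{k(1-s)}.$$
Summing over $k$ yields $\mu(S(I))\le C|I|^s\sum_{k\ge0}2^{k(1-s)}$, where the hypothesis $s>1$ makes the geometric series converge, so $\mu(S(I))\le C'|I|^s$. This is precisely where the restriction $1<s<\infty$ enters.

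The main obstacle is the covering step in the converse: one must verify carefully that each dyadic band $R_k$ is covered by only $O(2^k)$ hyperbolic disks of a single fixed radius, with the per-disk estimate controlled by $1-|w|$ comparable to $2^{-k}|I|$. This reduces to the uniform comparison between fixed-radius hyperbolic disks and Euclidean sectors recorded at the outset; once that comparison is in hand, the counting, the summation, and the decisive role of $s>1$ are all immediate.
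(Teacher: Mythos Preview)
The paper does not supply a proof of this proposition; it is stated with a citation to \cite{Zhu b,DS} and used as a black box. There is therefore nothing in the paper to compare your argument against.

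That said, your proof is correct and is essentially the standard one found in the cited references. The direction (i)~$\Rightarrow$~(ii) is immediate once one knows the Euclidean geometry of $D(z,r)$, and your dyadic-band decomposition for (ii)~$\Rightarrow$~(i) is exactly the mechanism by which the hypothesis $s>1$ enters: the factor $2^{k(1-s)}$ is summable precisely then. The covering count $N_k=O(2^k)$ is right, since the band $R_k$ has hyperbolic height $O(1)$ and hyperbolic width comparable to $|I|/(2^{-k}|I|)=2^k$. No gaps.
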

For $\alpha >-1$,  $(\alpha +2)$ measures are characterized by the
following result.
\begin{theorem} \cite[p. 40]{KHZ}
Suppose $\mu$ is a positive Borel measure on $\mathbb D$. Then $\mu$
is an $(\alpha+2)$-Carleson measure if and only if there exists the
positive constant $C$ such that
$$\int_\mathbb D |f(z)|^p d\mu(z)\leq C\int_\mathbb D |f(z)|^p
dA_\alpha(z)$$ for all $f\in A_\alpha^p$.
\end{theorem}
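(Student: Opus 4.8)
The plan is to prove both implications after reformulating the $(\alpha+2)$-Carleson condition in terms of hyperbolic disks. This is legitimate here because $\alpha>-1$ forces $s:=\alpha+2>1$, so the Proposition above applies and lets me replace ``$\mu(S(I))\le C|I|^{\alpha+2}$'' by the equivalent condition
$$\mu(D(a,r))\le C(1-|a|^2)^{\alpha+2}\qquad(a\in\mathbb D),$$
with a fixed radius $r>0$. I would work with this hyperbolic formulation throughout, since it pairs naturally with the sub-mean value machinery on the one side and with the test functions on the other.

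For the forward implication (Carleson $\Rightarrow$ embedding) the key tool is subharmonicity of $|f|^p$. First I would record the sub-mean value estimate
$$|f(z)|^p\le\frac{C}{(1-|z|^2)^{\alpha+2}}\int_{D(z,r)}|f(w)|^p\,dA_\alpha(w),$$
valid for holomorphic $f$; it follows by averaging $|f|^p$ over a Euclidean disk about $z$ of radius comparable to $1-|z|$, together with the comparabilities $A(D(z,r))\asymp(1-|z|^2)^2$ and $1-|w|^2\asymp1-|z|^2$ for $w\in D(z,r)$. Integrating this against $d\mu(z)$ and interchanging the order of integration (using the symmetry $w\in D(z,r)\Leftrightarrow z\in D(w,r)$) yields
$$\int_{\mathbb D}|f(z)|^p\,d\mu(z)\le C\int_{\mathbb D}|f(w)|^p\left(\int_{D(w,r)}\frac{d\mu(z)}{(1-|z|^2)^{\alpha+2}}\right)dA_\alpha(w).$$
Since $1-|z|^2\asymp1-|w|^2$ on $D(w,r)$, the inner integral is $\asymp\mu(D(w,r))/(1-|w|^2)^{\alpha+2}$, which is bounded by the Carleson hypothesis. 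This gives $\int_{\mathbb D}|f|^p\,d\mu\le C\|f\|_{p,\alpha}^p$.

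For the converse (embedding $\Rightarrow$ Carleson) I would test the inequality on the normalized family
$$f_a(z)=\frac{(1-|a|^2)^{(\alpha+2)/p}}{(1-\bar a z)^{2(\alpha+2)/p}},\qquad a\in\mathbb D.$$
Using the standard integral asymptotics $\int_{\mathbb D}(1-|z|^2)^\alpha|1-\bar a z|^{-c}\,dA(z)\asymp(1-|a|^2)^{\alpha+2-c}$ for $c>\alpha+2$ (see, e.g., \cite{KHZ}), with $c=2(\alpha+2)$, one checks that $\|f_a\|_{p,\alpha}^p\asymp1$ uniformly in $a$, so in particular $f_a\in A^p_\alpha$. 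On the other hand, for $z\in D(a,r)$ one has $|1-\bar a z|\asymp1-|a|^2$, hence $|f_a(z)|^p\asymp(1-|a|^2)^{-(\alpha+2)}$ there. Feeding $f_a$ into the assumed embedding and discarding all of the left-hand integral except the part over $D(a,r)$ gives $(1-|a|^2)^{-(\alpha+2)}\mu(D(a,r))\le C$, that is, $\mu(D(a,r))\le C(1-|a|^2)^{\alpha+2}$, which is exactly the hyperbolic form of the $(\alpha+2)$-Carleson condition.

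I expect the main obstacle to be the forward direction, and specifically obtaining the sub-mean value estimate with the correct exponent $(1-|z|^2)^{\alpha+2}$ and then justifying the Fubini interchange; once that is in place, everything reduces to the comparability $1-|w|^2\asymp1-|z|^2$ on hyperbolic disks and the bounded inner integral. In the converse, the only delicate point is fixing the exponents in $f_a$ so that $\|f_a\|_{p,\alpha}\asymp1$ while $|f_a|^p$ remains of size $(1-|a|^2)^{-(\alpha+2)}$ on $D(a,r)$, which the choice above arranges.
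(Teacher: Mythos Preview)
The paper does not prove this theorem: it is quoted as a known result with the citation \cite[p.~40]{KHZ} and used as a black box. There is therefore no proof in the paper to compare against.

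That said, your argument is correct and is essentially the standard proof one finds in the cited reference. The reduction to hyperbolic disks via the Proposition (legitimate since $\alpha+2>1$), the sub-mean-value/Fubini argument for the embedding, and the test-function family $f_a(z)=(1-|a|^2)^{(\alpha+2)/p}(1-\bar a z)^{-2(\alpha+2)/p}$ for the converse are exactly the ingredients used in \cite{KHZ}. The exponents you chose are the right ones, and the comparabilities $1-|w|^2\asymp 1-|z|^2$ and $|1-\bar a z|\asymp 1-|a|^2$ on $D(a,r)$ are what make both halves go through.
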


The next corollary is an immediate consequence of  Proposition.
\begin{wn}\cite{WYZ}
For  $\alpha>-1,\ \sigma>0$, let $\mu,\nu$ be positive Borel
measures on $\mathbb D$ such that  $$d\nu(z)=(1-|z|)^\sigma
d\mu(z).$$ Then $\mu$ is an $A^p_\alpha$-Carleson measure if and
only if $\nu$ is an $A^p_{\alpha+\sigma}$-Carleson measure.

\end{wn}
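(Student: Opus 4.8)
The plan is to translate the two ``$A^p$-Carleson'' conditions into the geometric language of hyperbolic disks, where the weight $(1-|z|)^\sigma$ relating $\mu$ and $\nu$ is essentially constant, and then to read off the equivalence.

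First I would set up the dictionary. Since $\alpha>-1$ we have $\alpha+2>1$, and since $\sigma>0$ we also have $(\alpha+\sigma)+2>1$; by the theorem of Zhu quoted above (the characterization via the embedding $A^p_\beta\subset L^p(d\mu)$), $\mu$ is an $A^p_\alpha$-Carleson measure precisely when it is an $(\alpha+2)$-Carleson measure, and $\nu$ is an $A^p_{\alpha+\sigma}$-Carleson measure precisely when it is an $(\alpha+\sigma+2)$-Carleson measure; in particular neither notion depends on $p$. Both exponents exceed $1$, so the Proposition applies and allows me to replace Carleson squares by hyperbolic disks: fixing any $r>0$, the statement to be proved becomes
$$\mu\bigl(D(z,r)\bigr)\leq C_1(1-|z|^2)^{\alpha+2}\ \text{for all }z\in\mathbb D\iff \nu\bigl(D(z,r)\bigr)\leq C_2(1-|z|^2)^{\alpha+\sigma+2}\ \text{for all }z\in\mathbb D.$$

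Next I would invoke the standard fact that, for a fixed $r>0$, there are constants $0<c_1\leq c_2$ depending only on $r$ with $c_1(1-|z|^2)\leq 1-|w|^2\leq c_2(1-|z|^2)$ whenever $w\in D(z,r)$. Hence $(1-|w|)^\sigma\asymp(1-|z|^2)^\sigma$ on $D(z,r)$ with comparability constants depending only on $r$ and $\sigma$, and integrating this two-sided bound against $d\mu$ over $D(z,r)$ gives $\nu(D(z,r))\asymp(1-|z|^2)^\sigma\mu(D(z,r))$ uniformly in $z$. Combining this with the displayed equivalence is now immediate in both directions: a bound $\mu(D(z,r))\leq C_1(1-|z|^2)^{\alpha+2}$ forces $\nu(D(z,r))\leq C_1'(1-|z|^2)^{\alpha+\sigma+2}$, and conversely $\mu(D(z,r))\leq c_1^{-\sigma}(1-|z|^2)^{-\sigma}\nu(D(z,r))\leq C_2'(1-|z|^2)^{\alpha+2}$. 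Running the Proposition backwards, and then the theorem of Zhu, converts these statements back into the $A^p$-Carleson language and finishes the argument.

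There is essentially no serious obstacle here; the result is indeed an ``immediate consequence'' once the reformulations are in place. The only points demanding care are verifying that both Carleson exponents $\alpha+2$ and $\alpha+\sigma+2$ are genuinely larger than $1$ (so that the Proposition is applicable), and quoting correctly the classical comparability of $1-|w|$ with $1-|z|$ on a hyperbolic disk of fixed radius, from which the comparison $\nu(D(z,r))\asymp(1-|z|^2)^\sigma\mu(D(z,r))$ is obtained.
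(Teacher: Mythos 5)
Your argument is correct and is exactly the route the paper intends: the paper gives no written proof, declaring the corollary ``an immediate consequence of Proposition,'' and your reduction via the $(\alpha+2)$-Carleson characterization, the hyperbolic-disk reformulation, and the comparability $1-|w|\asymp 1-|z|$ on $D(z,r)$ is precisely the omitted argument. Nothing is missing.
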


Recall that for $1<p<\infty$, the Besov space $B_p$ is the space of
all functions  $f$ analytic  on $\mathbb D$ and such that
$$\|f\|_{B_p}^p=\int_\mathbb D
|f'(z)|^p(1-|z|^2)^p d\tau(z)<\infty,$$ where
$$d\tau(z)=\frac{dA(z)}{(1-|z|^2)^2}$$ is the M\"obius invariant
measure on $\mathbb D$.

 We will use the fact that the Besov space
 $ B_p=P_\alpha (L^p,d\tau).$
The proof of this equality for $\alpha=0$ is given in \cite[p.
90-92]{Z1} and similar arguments can be used for $\alpha>-1$. In
particular, if $f=P_\alpha g$, where $g\in L^p(d\tau)$, then
$$(1-|z|^2)f'(z)=(\alpha+2)(1-|z|^2)\int_\mathbb D \frac{g(w)\bar w}{(1-z\bar
w)^{3+\alpha}}dA_\alpha (w).$$ It then follows from \cite[Theorem
1.9]{KHZ} that \begin{equation}\label{nier bp}\|f\|_{B_p}\leq
C_{p,\alpha}\|g\|_{L^p(d\tau)}.\end{equation}

 The next theorem gives a Lipschitz type estimate
for functions in the analytic Besov space.

\begin{theorem}\cite{Zhu ks}\label{tw 5}
For any $1<p<\infty$, there exists a constant $C_p>0$ such that
$$|f(z)-f(w)|\leq C_p\|f\|_{B_p}(\beta(z,w))^\frac 1q$$ for all $f\in
B_p$ and $z,w\in\mathbb D$, where $\frac 1p+\frac 1q =1$.
\end{theorem}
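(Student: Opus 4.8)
The plan is to reduce to the case $w=0$ by Möbius invariance, then integrate $f'$ along the radius from $0$ to $z$, apply Hölder's inequality, and finally convert the resulting one–dimensional integral of $|f'|^p$ into the two–dimensional Besov integral by subharmonicity.

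First I would note that the seminorm $\|f\|_{B_p}^p=\int_{\mathbb D}|f'(z)|^p(1-|z|^2)^{p-2}\,dA(z)$ is unchanged under precomposition with a disk automorphism $\varphi_w$, and so is the metric $\beta$. Setting $g=f\circ\varphi_w$ and using that $\varphi_w$ is an involution, one gets $g(0)=f(w)$, $g(\varphi_w(z))=f(z)$, $\|g\|_{B_p}=\|f\|_{B_p}$, and $\beta(\varphi_w(z),0)=\beta(z,w)$, so it suffices to prove $|f(a)-f(0)|\le C_p\|f\|_{B_p}(\beta(a,0))^{1/q}$ for every $a\in\mathbb D$, where $\beta(a,0)=\tfrac12\log\tfrac{1+|a|}{1-|a|}$. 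Writing $a=|a|e^{i\theta}$ and integrating along the radial segment, $|f(a)-f(0)|\le\int_0^{|a|}|f'(re^{i\theta})|\,dr$; inserting a factor $(1-r^2)$ and applying Hölder with exponents $p$ and $q$ gives
$$|f(a)-f(0)|\le\left(\int_0^{|a|}|f'(re^{i\theta})|^p(1-r^2)^{p-1}\,dr\right)^{1/p}\left(\int_0^{|a|}\frac{dr}{1-r^2}\right)^{1/q},$$
where the second factor is exactly $(\beta(a,0))^{1/q}$. It then remains only to bound the first factor by $C_p\|f\|_{B_p}$.

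This last estimate is the step I expect to be the main obstacle. Since $f'$ is holomorphic, $|f'|^p$ is subharmonic, so the sub-mean-value inequality over the Euclidean disk of radius $(1-r)/4$ centered at $re^{i\theta}$, combined with the fact that $1-|u|$ is comparable to $1-r$ on that disk, yields a pointwise bound of the shape $|f'(re^{i\theta})|^p(1-r^2)^{p-1}\le \frac{C}{1-r}\int_{|u-re^{i\theta}|<(1-r)/4}|f'(u)|^p(1-|u|^2)^{p-2}\,dA(u)$ (the powers of $1-r$ work out to leave exactly a factor $(1-r)^{-1}$). Integrating in $r$ and invoking Tonelli's theorem reduces matters to checking that, for each fixed $u$, the set of $r\in(0,|a|)$ with $|u-re^{i\theta}|<(1-r)/4$ lies in an interval of length $O(1-|u|)$ on which $1-r$ is comparable to $1-|u|$; this bounded-overlap fact makes $\int\frac{\mathbf 1}{1-r}\,dr=O(1)$ uniformly in $u$, whence $\int_0^{|a|}|f'(re^{i\theta})|^p(1-r^2)^{p-1}\,dr\le C\int_{\mathbb D}|f'(u)|^p(1-|u|^2)^{p-2}\,dA(u)=C\|f\|_{B_p}^p$, which finishes the proof. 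Everything apart from this overlap estimate is routine bookkeeping with powers of $1-r$. (Alternatively one could exploit the representation $B_p=P_\alpha(L^p(d\tau))$ recalled above, writing $f=P_\alpha g$ with $\|g\|_{L^p(d\tau)}$ comparable to $\|f\|_{B_p}$, applying Hölder to the integral representing $f(z)-f(w)$, and estimating the kernel difference $(1-z\bar u)^{-2-\alpha}-(1-w\bar u)^{-2-\alpha}$ directly; but the radial-integration argument is more self-contained.)
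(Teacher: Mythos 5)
The paper does not actually prove this statement: Theorem \ref{tw 5} is imported verbatim from \cite{Zhu ks} and used as a black box in the proof of Theorem \ref{b-balayage}, so there is no in-paper argument to compare yours against. Your proof is correct and self-contained. The Möbius reduction is sound (the Besov seminorm and $\beta$ are both invariant under $\varphi_w$, and $\varphi_w$ is an involution, so the case $w=0$ suffices); the Hölder step is exact, since with $\frac1p+\frac1q=1$ one has $(p-1)\frac qp=1$, so the conjugate factor is precisely $\bigl(\int_0^{|a|}\frac{dr}{1-r^2}\bigr)^{1/q}=(\beta(a,0))^{1/q}$; and the subharmonicity bookkeeping checks out: on the disk $|u-re^{i\theta}|<(1-r)/4$ one has $\tfrac34(1-r)\le 1-|u|\le\tfrac54(1-r)$, so the sub-mean-value inequality gives $|f'(re^{i\theta})|^p(1-r^2)^{p-1}\le C(1-r)^{-2+(p-1)-(p-2)}\int|f'(u)|^p(1-|u|^2)^{p-2}\,dA(u)=C(1-r)^{-1}\int_{D_r}(\cdots)$ for every $1<p<\infty$ (both signs of $p-2$ are handled by the two-sided comparability of $1-|u|$ and $1-r$). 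The overlap estimate you flag as the crux is indeed fine: for fixed $u$ the admissible $r$ lie in $\{r:\,||u|-r|<(1-r)/4\}$, an interval of length at most $\tfrac{8}{15}(1-|u|)$ on which $1-r\ge\tfrac45(1-|u|)$, so $\int\frac{\mathbf 1}{1-r}\,dr\le\tfrac23$ uniformly in $u$. This is the standard radial-integration proof of the Lipschitz estimate for $B_p$; your parenthetical alternative via $B_p=P_\alpha(L^p(d\tau))$ would align more closely with the machinery the paper actually sets up (inequality (\ref{nier bp}) and the projection $P_2$), but nothing in your chosen route is missing or incorrect.
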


\begin{proof}[Proof of Theorem \ref{b-balayage}] For $z,w$ we have
\begin{eqnarray*}
|G_{\mu}(z)-G_{\mu}(w)|& \leq&  \int_{\mathbb D}\left|\frac{(1-|a|^2)^2}{|1-a\bar z|^4}-\frac{(1-|a|^2)^2}{|1-a\bar w|^4}\right|d\mu(a)\\
& \leq&  \int_{\mathbb D}\left|\frac{(1-|a|^2)^2}{(1-a\bar z)^4}-\frac{(1-|a|^2)^2}{(1-a\bar w)^4}\right|d\mu(a).
\end{eqnarray*}
Since $\mu$ is a finite measure on $\mathbb D$, the Jensen
inequality yields
\begin{eqnarray*}
|G_{\mu}(z)-G_{\mu}(w)|^p&\leq& C \int_{\mathbb D}\left|\frac{(1-|a|^2)^2}{(1-a\bar z)^4}-\frac{(1-|a|^2)^2}{(1-a\bar w)^4}\right|^pd\mu(a)\\
&=& C \int_{\mathbb D}\left|\frac{1}{(1-a\bar z)^4}-\frac{1}{(1-a\bar w)^4}\right|^p(1-|a|^2)^{2p}d\mu(a).\\
\end{eqnarray*}
Let $q>1$ be the conjugate index for $p$, that is, $\frac 1p +\frac
1q =1$. Then $2p=2+\frac{2}{q-1}$ and under the assumption, $\mu$ is
an $A^r_{\frac{2}{q-1}}$-Carleson measure, $0<r<\infty$. By
Corollary, $(1-|a|^2)^{2p}d\mu(a)$ is an
$A^r_{2p+\frac{2}{q-1}}$-Carleson measure. Consequently,
\begin{eqnarray*}
&&\int_{\mathbb D}\left|\frac{1}{(1-a\bar z)^4}-\frac{1}{(1-a\bar w)^4}\right|^p(1-|a|^2)^{2p}d\mu(a)\\
&&\leq C \int_{\mathbb D}\left|\frac{1}{(1-a\bar z)^4}-\frac{1}{(1-a\bar w)^4}\right|^p(1-|a|^2)^{2p+\frac{2}{q-1}}dA(a)\\
&&= C \int_{\mathbb D}\left|\frac{(1-|a|^2)^2}{(1-\bar
az)^4}-\frac{(1-|a|^2)^2}{(1-\bar a w)^4}\right|^pdA_{\frac
{2}{q-1}}(a).
\end{eqnarray*}
Now set $\alpha=\frac{2}{q-1}$ and note that
\begin{eqnarray*}
&&\int_{\mathbb D}\left|\frac{(1-|a|^2)^2}{(1-\bar az)^4}-\frac{(1-|a|^2)^2}{(1-\bar aw)^4}\right|^p dA_{\alpha}(a)\\
&&= \left(\sup_{\|f\|_{q,\alpha}\leq 1}\left|\int_{\mathbb
D}\left(\frac{(1-|a|^2)^2}{(1-\bar az)^4}-\frac{(1-|a|^2)^2}{(1-\bar
a w)^4}\right)f(a)dA_{\alpha}(a)\right|\right)^p.
\end{eqnarray*}
Put $g=(\alpha+1)^\frac 1q(1-|a|^2)^{\frac{\alpha+2}{q}}f$ and
observe that $\|f\|_{q,\alpha}\leq 1$ if and only if $
\|g\|_{L^q(d\tau)}\leq 1$. Moreover, since $\alpha=\frac{2}{q-1}$
satisfies $\frac{\alpha+2}{q}=\alpha$, we get
\begin{eqnarray*}
&&\sup_{\|f\|_{q,\alpha}\leq 1}\left|\int_{\mathbb D}\left(\frac{(1-|a|^2)^2}{(1-\bar az)^4}-\frac{(1-|a|^2)^2}{(1-\bar a w)^4}\right)f(a)dA_{\alpha}(a)\right|\\
&&= C\sup_{\|g\|_{L^q(d\tau)}\leq 1}\left|\int_{\mathbb D}\left(\frac{(1-|a|^2)^2}{(1-\bar az)^4}-\frac{(1-|a|^2)^2}{(1-\bar a w)^4}\right)g(a)dA(a)\right|\\
&&= C\sup_{\|g\|_{L^q(d\tau)}\leq 1}\left|\int_{\mathbb D}\left(\frac{g(a)}{(1-\bar az)^4}-\frac{g(a)}{(1-\bar a w)^4}\right)dA_2(a)\right|\\
&&= C\sup_{\|g\|_{L^q(d\tau)}\leq 1}\left|P_2g(z)-P_2g(w)\right|\leq
C(\beta(z,w))^\frac 1p,
\end{eqnarray*}
where the last inequality follows from Theorem \ref{tw 5} and
inequality (\ref{nier bp}).
\end{proof}

\vskip20pt

Maria Nowak, Pawe\l \ Sobolewski

Instytut Matematyki  UMCS

pl. Marii Curie-Sk\l odowskiej 1

20-031 Lublin, Poland

\email{mt.nowak@poczta.umcs.lublin.pl}

\email{pawel.sobolewski@umcs.eu}

\end{document}